

                                             
                                             

                                                  

\documentclass[amstex,12pt]{article}

\pagestyle{myheadings}
\thispagestyle{empty}

\usepackage{amsmath}
\usepackage{amsthm}
\usepackage{amsbsy}

\newtheorem{theo}{Theorem}
\newtheorem{cor}{Corollary}[theo]

\setlength{\oddsidemargin}{.25 in}
\setlength{\textwidth}{6 in}
\setlength{\topmargin}{0 in}

\begin{document}
\renewcommand{\thefootnote}{\fnsymbol{footnote}}
\begin{center}
\Large \bf The Last Digit of $\boldsymbol{\ \binom{2n}{n}\ \text{\Large and}\ 
 \displaystyle \pmb{\sum}\textstyle \binom{n}{i}\binom{2n-2i}{n-i}}\vspace{.3in}$

\normalfont \large  Walter Shur

20 Speyside Circle

Pittsboro, NC 27312

wrshur@gmail.com



\end{center}

\abstract{Let \mbox{$f_{n}= \displaystyle \sum_{i=0}^n \textstyle \binom{n}{i}\binom{2n-2i}{n-i}$}, \mbox{$g_{n}= \displaystyle \sum_{i=1}^n \textstyle \binom{n}{i}\binom{2n-2i}{n-i}$}. Let $\{a_k\}_{k=1}$ be the set of all positive integers n, in increasing order, for which $\binom{2n}{n}$ is not divisible by 5, and let $\{b_k\}_{k=1}$ be the set of all positive integers n, in increasing order, for which $g_n$ is not divisible by 5. This note finds simple formulas for $a_k$, $b_k$, $ \binom{2n}{n}\ mod\ 10$, $ f_{n}\ mod\ 10$, and $ g_{n}\ mod\ 10$.}

\vspace{.2in}

\bf Definitions \vspace{.1in}\\

 \normalfont \mbox{$f_{n}= \displaystyle \sum_{i=0}^n \textstyle \binom{n}{i}\binom{2n-2i}{n-i}$};\quad \mbox{$g_{n}= \displaystyle \sum_{i=1}^n \textstyle \binom{n}{i}\binom{2n-2i}{n-i}$}\\
  
$\{a_k\}_{k=1}$ is the set of all positive integers n, in increasing order, for which $\binom{2n}{n}$ is not divisible by 5. \\

$\{b_k\}_{k=1}$ is the set of all positive integers n, in increasing order, for which $g_n$ is not divisible by 5.\\

$u_n$ is the number of unit digits in the base 5 representation of~n~. 
\pagebreak

\begin{theo}

$a_{k}$ is the number in base 5 whose digits represent the number $k$ in base 3. If $n \ge 1$, 
\[\binom{2n}{n}\ mod\ 10=\left\{\begin{array}{l}
     \left.\begin{array}{l}0 \end{array}\right.\ \ \ if\ n \not \in \{a_k\}\\
     \left.\begin{array}{l}
            2\\4\\6\\8 \end{array}\right\}\ if\ n  \in \{a_k\}\ \text{and}\ u_n\ mod\ 4=\left\{\begin{array}{l}
1\\2\\0\\3\end{array}\right.\\
\end{array}\right..\]
Note that if $n \in \{a_k\},\ u_n$ is odd (even) if and only if n is odd (even). 
\end{theo}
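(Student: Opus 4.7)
The plan is to handle the two claims separately, using Kummer's theorem to characterize $\{a_k\}$ and Lucas's theorem for the residue mod~$5$, then recombining with the parity via CRT.

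First I would pin down $\{a_k\}$. By Kummer's theorem, the exponent of $5$ in $\binom{2n}{n}$ equals the number of carries when $n$ is added to itself in base~$5$. Hence $5\nmid\binom{2n}{n}$ exactly when every base-$5$ digit $d_i$ of $n$ satisfies $2d_i<5$, i.e.\ $d_i\in\{0,1,2\}$. Listing such $n$ in increasing order is the same as writing $k=1,2,3,\ldots$ in base~$3$ and reinterpreting each digit string in base~$5$, which yields the stated formula $a_k$.

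For the last-digit formula I would reduce mod~$2$ and mod~$5$ separately. The identity $\binom{2n}{n}=2\binom{2n-1}{n-1}$ shows $\binom{2n}{n}$ is even for $n\ge 1$, so everything hinges on the mod-$5$ residue. If $n\notin\{a_k\}$, this residue is $0$ and the last digit is $0$. If $n\in\{a_k\}$, there are no carries in $n+n$ base~$5$, so $2n$ has base-$5$ digits $(2d_0,2d_1,\ldots)$ and Lucas's theorem gives
\[
\binom{2n}{n}\equiv\prod_i\binom{2d_i}{d_i}\pmod 5 .
\]
Since $\binom{0}{0}=1$, $\binom{2}{1}=2$, and $\binom{4}{2}\equiv 1\pmod 5$, only the digits equal to $1$ matter and the product collapses to $2^{u_n}\pmod 5$. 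The four residue classes of $u_n\bmod 4$ produce $2^{u_n}\equiv 1,2,4,3\pmod 5$ respectively, and combining each with evenness via CRT yields the tabulated last digits $6,2,4,8$.

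Finally, the parenthetical note is immediate: when $n\in\{a_k\}$ every base-$5$ digit is $0$, $1$, or $2$, and since all powers of $5$ are odd, $n\equiv\sum_i d_i\equiv u_n\pmod 2$ (the $0$-digits and $2$-digits contribute nothing mod~$2$). I do not expect a serious obstacle — the argument is essentially a clean packaging of Kummer, Lucas, and CRT; the only place I might stumble is the CRT bookkeeping that matches the four classes of $u_n\bmod 4$ to the listed residues $\{2,4,6,8\}$, which needs to be checked case by case.
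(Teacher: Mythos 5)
Your proposal is correct and follows essentially the same route as the paper: a base-5 digit analysis (Lucas, with Kummer standing in for the paper's explicit carry/digit argument) showing the $n$ with $5\nmid\binom{2n}{n}$ are exactly those with base-5 digits in $\{0,1,2\}$, hence $a_k$ is $k$ rewritten from base 3 to base 5, and the count $u_n$ of $1$-digits making the residue $2^{u_n}$. Your only real departure is cosmetic but welcome: combining the mod-5 residue with evenness by CRT handles the $u_n=0$ case uniformly, whereas the paper's direct mod-10 product needs a footnote observing that some digit must then equal $2$, so a factor $\binom{4}{2}=6$ supplies the last digit $6$.
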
	

\begin{proof}	

\par 	From Lucas' theorem [1], we have \[\binom{2n}{n}\equiv 
	\binom{N_1}{n_1}\binom{N_2}{n_2} \cdots 	\binom{N_t}{n_t}\mod{5},\] 
	where  $2n=(N_r \cdots 	N_3 N_2 	N_1)_5$, $n=(n_s 	\cdots n_3 n_2 n_1)_5$, and $t=min(r,s)$.

\par	Suppose that for each $i\leq t$,\ $n_i\leq 2$. Then, 	for each $i\leq t$,\ $N_i=2n_i$. Since $n_i=0,1$ or 	$2$, each term of the product$ 	\binom{N_1}{n_1}\binom{N_2}{n_2} \cdots 	\binom{N_t}{n_t}$ is $1,2$ or $6$. Hence, $\binom{2n}n$ 	is not divisible by 5.

\par	Suppose that for some i, $n_i>2$. Let $i_m$ be the 	smallest value of i for which that is true. Then, if 	$n_{i_m}$ is 3 or 4, $N_{i_m}$ is 1 or 3 (resp.). In 	either case, $\binom{N_{i_m}}{n_{i_m}}=0$, and 	$\binom{2n}{n}$ is divisible by 5. 	
	
\par	Thus,  $\{a_k\}$ is the set of all positive integers written in base 3, but interpreted as if they were written in base 5. Since $\{a_k\}$\ is in increasing order, the first part of the theorem is proved.  

Suppose now that $\binom{2n}{n}$ is not divisible by 5. Then each term of the product$ \binom{N_1}{n_1}\binom{N_2}{n_2} \cdots \binom{N_t}{n_t}$ is $1,2$ or\ $6$\ (according as $n_i=0,1,\ or\ 2$). We have, noting that $\binom{2n}{n}$ is even,
\stepcounter{footnote}
\vspace{.13in}\[\left .\begin{array}{c} 2^{u_n}\ mod\ 10=6\footnotemark,\ 2,\ 4\ \text{or}\ 8, \\
               \phantom{x} \\
 \binom{N_1}{n_1}\binom{N_2}{n_2} \cdots \binom{N_t}{n_t}\ mod\ 10=6,\ 2,\ 4\ \text{or}\ 8,\\
\phantom{x}\\
 \binom{2n}{n}\ mod\ 10=6,\ 2,\ 4\ \text{or}\ 8,
 \end{array}\right \}\text{according as}\ u_n\ mod\ 4=0,\ 1,\ 2\ \text{or}\ 3.  \]\footnotetext{Equals 1 if $u_n=0$; nevertheless, the next line follows since, if $u_n=0,$ at least one $n_i$ must equal 2, making $\binom{2n_i}{n_i}=6.$ }

\end{proof}	
\vspace{.1in}	
\begin{cor}

\[a_k=k+2\displaystyle \sum_{i=1}  \textstyle \left \lfloor\frac{k}{3^i}\right \rfloor 5^{i-1}.\]
	
\end{cor}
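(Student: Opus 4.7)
The plan is to unpack the description of $a_k$ given by the theorem and rearrange everything into the claimed closed form. By the theorem, if we write $k$ in base $3$ as $k=\sum_{j\ge 0} d_j\, 3^j$ with $d_j\in\{0,1,2\}$, then $a_k$ is obtained by reading those same digits in base $5$, so
\[
a_k \;=\; \sum_{j\ge 0} d_j\, 5^j, \qquad a_k - k \;=\; \sum_{j\ge 1} d_j\,(5^j-3^j).
\]
(The $j=0$ term drops out since $5^0-3^0=0$.) So the goal reduces to showing that
\[
\sum_{j\ge 1} d_j\,(5^j-3^j) \;=\; 2\sum_{i\ge 1}\Bigl\lfloor \tfrac{k}{3^i}\Bigr\rfloor 5^{i-1}.
\]

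Next I would express $\lfloor k/3^i\rfloor$ in terms of the base-$3$ digits. Since $k=\sum_{j\ge 0} d_j 3^j$, one immediately has $\lfloor k/3^i\rfloor = \sum_{j\ge i} d_j\, 3^{j-i}$. Substituting this into the right-hand side and interchanging the order of summation gives
\[
2\sum_{i\ge 1}\Bigl\lfloor \tfrac{k}{3^i}\Bigr\rfloor 5^{i-1}
\;=\; 2\sum_{j\ge 1} d_j \sum_{i=1}^{j} 5^{i-1}\, 3^{j-i}.
\]

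The inner sum is a finite geometric-type sum that evaluates by the standard identity
\[
\sum_{i=1}^{j} 5^{i-1}\, 3^{j-i} \;=\; \frac{5^j - 3^j}{5-3} \;=\; \frac{5^j-3^j}{2},
\]
so the factor of $2$ cancels the denominator and the right-hand side becomes $\sum_{j\ge 1} d_j (5^j-3^j)$, matching $a_k-k$ exactly.

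The only real obstacle is the bookkeeping for the double sum; once the base-$3$ digit expansion of $k$ and the digit expression for $\lfloor k/3^i\rfloor$ are in hand, the identity collapses to the finite geometric sum identity $5^j-3^j = 2\sum_{m=0}^{j-1} 5^m 3^{j-1-m}$, and the corollary follows.
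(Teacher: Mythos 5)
Your proof is correct. It differs from the paper's in the direction of the computation: the paper starts from $a_k=\sum_{i\ge 1} d_i 5^{i-1}$ and substitutes the identity $d_i=\lfloor k/3^{i-1}\rfloor-3\lfloor k/3^{i}\rfloor$, after which the sum regroups (telescopes) immediately into $k+2\sum_{i\ge1}\lfloor k/3^{i}\rfloor 5^{i-1}$ via the one-line observation $\lfloor k/3^{i}\rfloor 5^{i}-3\lfloor k/3^{i}\rfloor 5^{i-1}=2\lfloor k/3^{i}\rfloor 5^{i-1}$. You instead work from the right-hand side, expanding each floor as $\lfloor k/3^{i}\rfloor=\sum_{j\ge i} d_j 3^{j-i}$, interchanging the double sum, and invoking the factorization $5^{j}-3^{j}=2\sum_{m=0}^{j-1}5^{m}3^{j-1-m}$ to match $a_k-k=\sum_{j\ge1} d_j(5^{j}-3^{j})$. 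The two arguments are dual uses of the same digit/floor relationship: the paper's is a bit shorter since it needs no summation interchange or geometric sum, while yours makes explicit where the factor $2$ comes from (the difference $5-3$ of the two bases) and isolates the per-digit contribution $d_j(5^{j}-3^{j})$, which is a nice conceptual bonus. Both are complete and valid.
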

\pagebreak
\begin{proof}

Let $k=(\cdots d_3d_2d_1)_3$, and consider $a_k=\displaystyle \sum_{i=1}
\textstyle d_i 5^{i-1}$.
	
$\begin{array}{ccccc}
	d_1 &=&k &-&3\left\lfloor\frac{k}{3}\right\rfloor\vspace{5pt}\\
d_2&=&\left\lfloor\frac{k}{3}\right\rfloor&-&3\left\lfloor\frac{k}{3^2}\right\rfloor\vspace{5pt} \\
    d_3&=&\left\lfloor\frac{k}{3^2}\right\rfloor&-&3\left\lfloor\frac{k}{3^3}
\right\rfloor 	\\
	\vdots &\ &\vdots &\  & \vdots
\end{array}$

\vspace{.3in}Therefore, 
\[	\sum_{i=1}d_i 5^{i-1}=\sum_{i=1}\left(\left\lfloor\frac{k}{3^{i-1}}\right\rfloor-
	3\left\lfloor\frac{k}{3^i}\right\rfloor \right )5^{i-1}.
\]

\vspace{.3in}Since $\left\lfloor\frac{k}{3^i}\right\rfloor5^i-3\left\lfloor\frac{k}{3^i}\right
\rfloor5^{i-1}=2  \left \lfloor\frac{k}{3^i}\right\rfloor 5^{i-1}$, the corollary is proved.

\end{proof}

\begin{cor}
Let $\mu_k$ be the largest integer $t$ such that $k/3^t$ is an integer. Then,
\[a_k-a_{k-1}=\frac{5^{\mu_k}+1}{2},\  and\    a_k=1+\sum_{i=2}^k\frac{5^{\mu_i}+1}{2}.\]
 
$\mu_k=m$\  if and only if $k \in \{j3^m\},$\ where $j$ is a positive integer and\ $j\ mod\ 3\ne 0.$	

\end{cor}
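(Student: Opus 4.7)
The natural route is to take Corollary~1 as given and compute the difference $a_k-a_{k-1}$ directly, then telescope to get the stated closed form; the characterisation of $\mu_k$ is essentially a restatement of its definition.

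First I would apply Corollary~1 to write
\[
a_k-a_{k-1}=1+2\sum_{i\ge 1}\Bigl(\Bigl\lfloor\tfrac{k}{3^i}\Bigr\rfloor-\Bigl\lfloor\tfrac{k-1}{3^i}\Bigr\rfloor\Bigr)5^{i-1}.
\]
The key observation is the standard fact that $\lfloor k/3^i\rfloor-\lfloor(k-1)/3^i\rfloor$ equals $1$ if $3^i\mid k$ and $0$ otherwise; by the definition of $\mu_k$, the set of $i\ge 1$ for which $3^i\mid k$ is exactly $\{1,2,\ldots,\mu_k\}$.  Hence the sum collapses to a finite geometric series and
\[
a_k-a_{k-1}=1+2\sum_{i=1}^{\mu_k}5^{i-1}=1+2\cdot\frac{5^{\mu_k}-1}{4}=\frac{5^{\mu_k}+1}{2},
\]
which is the first identity.

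For the closed form for $a_k$, I would telescope: since $a_1=1$ (directly from Corollary~1, as $\lfloor 1/3^i\rfloor=0$ for all $i\ge 1$), summing the above from $i=2$ to $k$ gives
\[
a_k=a_1+\sum_{i=2}^k(a_i-a_{i-1})=1+\sum_{i=2}^k\frac{5^{\mu_i}+1}{2}.
\]
Finally, the characterisation ``$\mu_k=m$ iff $k=j\cdot 3^m$ with $j\not\equiv 0\pmod 3$'' is immediate from the definition of $\mu_k$ as the exponent of $3$ in the prime factorisation of $k$: writing $k=3^m j$, $\mu_k\ge m$ is equivalent to $3^m\mid k$, and $\mu_k\le m$ is equivalent to $3\nmid j$.

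I do not anticipate any real obstacle here; the only thing to be careful about is the telescoping identity for the floor function and the initialisation $a_1=1$, both of which are routine.
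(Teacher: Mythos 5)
Your proof is correct, but it takes a different route from the paper's. The paper argues directly from the base-3/base-5 digit description of $a_k$: it splits into the cases $\mu_k>0$ and $\mu_k=0$, observes that subtracting $1$ from $k=(\cdots d_{\mu_k+1}0\cdots 0)_3$ turns the trailing zeros into twos and decrements the next digit, and then computes $a_k-a_{k-1}=5^{\mu_k}-2\bigl(5^{\mu_k-1}+\cdots+1\bigr)=\tfrac{5^{\mu_k}+1}{2}$ from the resulting base-5 values. You instead start from the closed form of Corollary~1 and use the identity $\lfloor k/3^i\rfloor-\lfloor (k-1)/3^i\rfloor=1$ precisely when $3^i\mid k$ (i.e.\ for $i=1,\dots,\mu_k$), so the difference collapses to a geometric series with no case distinction; the telescoping step and the check $a_1=1$ then give the summation formula, and the characterisation of $\mu_k=m$ is, as you say, just the definition of the $3$-adic valuation. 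Your version buys uniformity (no separate treatment of $\mu_k=0$) and makes the dependence on Corollary~1 explicit, while the paper's version stays closer to the digit-carrying picture that underlies Theorem~1 and Corollary~3; both are complete and correct.
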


\begin{proof}

If $\mu_k>0$, then \[k=(\cdots d_{\mu_k +1}0\cdots 0)_3;\quad d_{\mu_k+1}\ge 1;\quad \text{and}\  k-1=(\cdots (d_{\mu_k+1}-1)2\cdots 2)_3.\]
Hence, \[a_k-a_{k-1}=5^{\mu_k}-2[5^{\mu_k-1}+5^{\mu_k-2}+\cdots+1]=\frac{5^{\mu_k}+1}
{2}.\]
If $\mu_k=0$,\ then \[k=(\cdots d_1)_3;\quad d_1\ge 1;\quad\text{and} \ k-1=(\cdots (d_1-1))_3.\]
Hence,\[a_k-a_{k-1}=1=\frac{5^{\mu_k}+1}{2}.\]
\nopagebreak
\vspace{.1in}The remaining parts of the corollary follow immediately. 

\end{proof}

\begin{cor}

If $k>1$,
\begin{equation*}
a_k=
  \begin{cases}
   5a_{\frac{k}{3}}& \text{if $k\ mod\ 3=0$},\\
	a_{k-1}+1& \text{if $k\ mod\ 3 \ne 0$}.
  \end{cases}
\end{equation*}
\end{cor}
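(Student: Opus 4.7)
The plan is to invoke the characterization from Theorem~1: $a_k$ is the integer whose base-$5$ digits are precisely the base-$3$ digits of $k$. Both cases of the recursion then reduce to observing how the base-$3$ expansion of $k$ transforms under two simple arithmetic operations, $k \mapsto 3k$ and $k \mapsto k-1$.

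For the case $k \bmod 3 = 0$, I would write $k/3 = (d_m\cdots d_1)_3$, so that multiplying by $3$ merely appends a $0$ on the right: $k = (d_m\cdots d_1\,0)_3$. Reading this same digit string in base $5$ gives $(d_m\cdots d_1\,0)_5 = 5\cdot(d_m\cdots d_1)_5$. By Theorem~1 the left-hand side is $a_k$ and the factor on the right is $a_{k/3}$, so $a_k = 5\,a_{k/3}$.

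For the case $k \bmod 3 \ne 0$, the last base-$3$ digit $d_1$ of $k$ equals $1$ or $2$. Consequently the base-$3$ expansion of $k-1$ agrees with that of $k$ in every position except the units position, where $d_1$ is replaced by $d_1 - 1$. Interpreting these two strings in base $5$, we see that $a_{k-1}$ differs from $a_k$ only in the units digit, which is smaller by $1$; thus $a_k = a_{k-1} + 1$. (This is also precisely the $\mu_k = 0$ instance of Corollary~2.)

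I do not expect any real obstacle: once the digit-by-digit correspondence supplied by Theorem~1 is in hand, the corollary is just a direct transcription of how base-$3$ expansions react to appending a zero and to decrementing a nonzero units digit.
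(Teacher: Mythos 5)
Your proposal is correct and matches the paper's argument: the paper likewise handles $k \bmod 3 = 0$ by noting that dividing by $3$ strips the trailing base-$3$ zero (so $a_k = 5a_{k/3}$), and disposes of $k \bmod 3 \ne 0$ via the $\mu_k = 0$ case of Corollary~1.2, which is exactly the no-borrow decrement argument you spell out directly. No gaps.
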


\begin{proof}

If $k\ mod\ 3=0,\ \text{then}\ \ k=(\cdots d_20)_3\ \ \text{and} \ \ \frac{k}{3}=(\cdots
 d_2)_3.\ \text{Hence,}\ a_k~=~5a_{\frac{k}{3}}.$

If\ $k\ mod\ 3\ne 0,\ \text{then}\ \mu_k=0\ \text{and from Corollary 1.2, we have}\ \ a_k~-~a_{k-1}~=~1.$  

\end{proof}

\vspace{.2in}\begin{theo}
	$b_k$ is the number in base 5 whose digits represent the number $2k-1$ in base 3, i.e. $b_k=a_{2k-1}$. Furthermore,\ $g_n\ mod\ 10$ can only take on the values 1,5 or 9, as follows:
\[g_n\ mod\ 10=\left\{\begin{array}{l}
     \left.\begin{array}{l}5 \end{array}\right.\ \ \ if\ n \not \in \{b_k\}\\
     \left.\begin{array}{l}
            1\\9 \end{array}\right\}\ if\ n  \in \{b_k\}\ \text{and}\ u_n\ mod\ 4=\left\{\begin{array}{l}
1\\3\end{array}\right.
\end{array}\right..\]

\end{theo}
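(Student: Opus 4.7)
The plan is to reduce everything to a single mod-$5$ congruence for $f_n$ and combine it with the parity of $g_n$. First, the re-indexing $m = n-i$ rewrites the defining sum as
\[
f_n \;=\; \sum_{m=0}^{n} \binom{n}{m}\binom{2m}{m}.
\]
From this form, oddness of $g_n$ is immediate: $\binom{2m}{m}$ is even for every $m\ge 1$, so $f_n$ is odd; and $\binom{2n}{n} = 2\binom{2n-1}{n-1}$ is even for $n\ge 1$, so $g_n = f_n - \binom{2n}{n}$ is always odd.

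The technical heart of the argument is the congruence $f_n \equiv (-1)^n \binom{2n}{n} \pmod 5$, which I would prove in the Lucas spirit of Theorem~1. Writing $n = (n_s\cdots n_1)_5$ and $m = (m_s\cdots m_1)_5$, I first note that if some $m_i \ge 3$ then $\binom{2m}{m}\equiv 0\pmod 5$: taking the smallest such $i$, no carries enter position $i$ (all lower digits are $\le 2$), so the digit of $2m$ at position $i$ is $2m_i \bmod 5 \in\{1,3\}$, strictly less than $m_i$, and Lucas forces a zero. When every $m_i \le 2$, no carries occur anywhere, so Lucas gives
\[
\binom{n}{m}\binom{2m}{m}\;\equiv\;\prod_i \binom{n_i}{m_i}\binom{2m_i}{m_i}\pmod 5,
\]
and the sum over such $m$ factors as $f_n \equiv \prod_i A(n_i)\pmod 5$, where $A(j) := \sum_{m=0}^{\min(j,2)}\binom{j}{m}\binom{2m}{m}$. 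A direct computation gives $A(0),\dots,A(4) = 1,3,11,25,45 \equiv 1,3,1,0,0 \pmod 5$, matching $(-1)^j\binom{2j}{j} \bmod 5$. Multiplying and using $n \equiv \sum n_i \pmod 2$ (since $5\equiv 1\pmod 2$) together with $\binom{2n}{n}\equiv\prod_i\binom{2n_i}{n_i}\pmod 5$ from Theorem~1, yields the congruence.

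Once this is in hand, $g_n \equiv ((-1)^n - 1)\binom{2n}{n}\pmod 5$, which is nonzero exactly when $n$ is odd and $n \in \{a_k\}$. To identify this set with $\{a_{2k-1}\}$, I would note that since $5$ and $3$ are both odd, any integer has the same parity as the sum of its base-$5$ (or base-$3$) digits; for $a_j \in \{a_k\}$ the base-$5$ digits of $a_j$ coincide with the base-$3$ digits of $j$, so $a_j$ and $j$ share parity, and the odd elements of $\{a_k\}$ are precisely $a_1, a_3, a_5, \ldots$. For $g_n \bmod 10$: when $n\notin\{b_k\}$, oddness plus divisibility by $5$ force $g_n \equiv 5 \pmod{10}$; when $n \in \{b_k\}$, Theorem~1 gives $\binom{2n}{n} \equiv 2^{u_n}\pmod 5$, so $g_n \equiv -2^{u_n+1}\pmod 5$, and since $u_n$ is necessarily odd the period-$4$ cycle of $2^k\bmod 5$ yields $g_n \equiv 1\pmod 5$ for $u_n\equiv 1\pmod 4$ and $g_n \equiv 4\pmod 5$ for $u_n\equiv 3\pmod 4$, which combined with oddness gives the claimed values $1$ and $9$.
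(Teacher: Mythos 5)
Your proof is correct, but it reaches the key congruence by a genuinely different route than the paper. The paper proceeds through generating functions: it shows $F(z)=\sum f_nz^n=\frac{1}{\sqrt{1-z}}\frac{1}{\sqrt{1-5z}}$, extracts the exact identity $4^nf_n=\sum_{i\ge 0}\binom{2i}{i}\binom{2n-2i}{n-i}5^i$, and reads off mod $5$ that $g_n\equiv 0$ iff $(4^n-1)\binom{2n}{n}\equiv 0$, i.e.\ iff $n$ is even or $n\notin\{a_k\}$; the mod-$10$ values then come from a short computation with $\binom{2n}{n}\bmod 10$ from Theorem 1. You instead prove $f_n\equiv(-1)^n\binom{2n}{n}\pmod 5$ directly from the rewritten sum $\sum_m\binom{n}{m}\binom{2m}{m}$ by a Lucas-style digit factorization, computing the digit-local sums $A(j)\bmod 5=1,3,1,0,0$ and matching them to $(-1)^j\binom{2j}{j}$ — note this is exactly the congruence the paper's identity yields, since $4^n\equiv(-1)^n\pmod 5$. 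Your approach buys a self-contained, purely arithmetic argument in the same spirit as Theorem 1 (and you make the oddness of $g_n$ explicit, which the paper uses but does not prove); the paper's approach buys the stronger exact $5$-adic identity for $4^nf_n$, which carries information beyond the first power of $5$. Two small points you should tighten: the factorization $f_n\equiv\prod_iA(n_i)$ requires extending the sum over all base-$5$ digit strings $m$ (legitimate because any term with $m>n$ has a Lucas factor $\binom{n_i}{m_i}=0$), and the relation $\binom{2n}{n}\equiv\prod_i\binom{2n_i}{n_i}\pmod 5$ is proved in Theorem 1 only when every $n_i\le 2$; when some $n_i\ge 3$ you need the one-line observation that both sides vanish, since $\binom{6}{3}\equiv\binom{8}{4}\equiv 0\pmod 5$. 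Neither gap is serious.
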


\begin{proof}

Let $F(z)=\displaystyle \sum_n f_n z^n=\displaystyle \sum_n z^n\sum_i \binom{n}{i}\binom{2n-2i}{n-i}.$

\vspace{.15in}Letting t=n-i, we have

\begin{align*}
F(z)&=\sum_nz^n\sum_t\binom{n}{t}\binom{2t}{t}\\
&=\sum_t\binom{2t}{t}\sum_n\binom{n}{t}z^n\\
&=\frac{1}{1-z}\sum_t\binom{2t}{t}\left (\frac{z}{1-z}\right )^t\qquad \text{(see [2])}\\
&=\frac{1}{1-z}\frac{1}{\sqrt{1-\frac{4z}{1-z}}}=\frac{1}{\sqrt{1-z}}\frac{1}{\sqrt{1-5z}}\qquad \text{(see [2])}\\
&=[1+(\frac{1}{4})\binom{2}{1}z+(\frac{1}{4})^2\binom{4}{2}z^2+\cdots]
[1+(\frac{1}{4})\binom{2}{1}5z+(\frac{1}{4})^2\binom{4}{2}5^2z^2+\cdots].
\end{align*}

Hence, \[f_n=\frac{1}{4^n}\sum_{i=0}\binom{2i}{i}
\binom{2n-2i}{n-i}5^i,\]
and
\begin{align*}
g_n&=\frac{1}{4^n}\sum_{i=0}\binom{2i}{i}\binom{2n-2i}{n-i}5^i-\binom{2n}{n},\\
&\phantom{=}\\
&=\frac{\displaystyle \sum_{i=1}\binom{2i}{i}\binom{2n-2i}{n-i}5^i-(4^n-1)\binom{2n}{n}}
{4^n}.
\end{align*}

Thus we see that\ $g_n$ is divisible by 5 if and only if\  $(4^n-1)\binom{2n}{n}$\ is divisible by 5. And since $g_n$ is odd, $g_n$ mod $10=5$ if and only if $g_n$ is divisible by 5.\ $4^n-1$\  is divisible by 5 if and only if n is even. Therefore, $g_n$ mod $10\ne 5$ if and only if n is odd and $n \in \{a_k\}$.\ Hence,\ $b_k=a_{2k-1},$\ from which it follows that\ $b_k$\ is the number in base 5 whose digits represent the number 2k-1 in base 3. 

Suppose that $g_n\ mod\ 10\ne 5.\ \text{Then }\ \binom{2n}{n}\ mod\ 10=c,$\ where (since $n \in \{a_k\}$ and n and $u_n$ are odd) c is 2 or 8, according as\ $u_n$\ mod $4=1\ \text{or}\ 3.$  Thus, for some non-negative integers j and k,\ $4^n-1=10j+3$\ and\ $\binom{2n}{n}=10k+c. $\ Since\ $\binom{2i}{i}$\ is even when\ $i\ge 1$,\ for some non-negative integer q we have
\[ 4^n g_n=10q-(10j+3)(10k+c).\] 

Since\ $g_n$\ is odd, and $4^n\ mod\ 10=4,$\ we have

\vspace{.13in}If c=2,\ $g_n\ mod\ 10=1;$\\
\indent if c=8,\ $g_n\ mod\ 10=9.$   

\end{proof}   

\begin{cor}

\[b_k=2k-1+2\displaystyle \sum_{i=1} \textstyle \left\lfloor\frac{2k-1}{3^i}\right\rfloor 5^{i-1}.\]

\end{cor}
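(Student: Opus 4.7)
The plan is to reduce the claim to a direct substitution into Corollary 1.1. Theorem 2 has already established the identity $b_k = a_{2k-1}$, so the entire content of the corollary is to re-express $a_{2k-1}$ using the closed form for $a_k$ derived earlier.

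First I would invoke Corollary 1.1, which states that
\[
a_k = k + 2\sum_{i=1} \left\lfloor \frac{k}{3^i} \right\rfloor 5^{i-1}.
\]
This formula was obtained by reading off the base-3 digits $d_i$ of $k$ via $d_i = \lfloor k/3^{i-1}\rfloor - 3\lfloor k/3^i\rfloor$ and re-summing $\sum d_i 5^{i-1}$. Since it holds for every positive integer argument, I may substitute the integer $2k-1$ for $k$.

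Next I would apply Theorem 2, where the derivation of $g_n \bmod 10$ concludes with $b_k = a_{2k-1}$. Substituting $2k-1$ into the displayed formula for $a_k$ immediately yields
\[
b_k = a_{2k-1} = (2k-1) + 2\sum_{i=1} \left\lfloor \frac{2k-1}{3^i} \right\rfloor 5^{i-1},
\]
which is precisely the claim. No new combinatorial input is needed.

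Since the proof is a one-line substitution, there is essentially no obstacle; the only thing to verify is that the substitution is legal, i.e.\ that Corollary 1.1 was proved for all positive integers $k$ and not merely for a restricted range. A quick re-reading of that corollary's proof confirms that the base-3 digit extraction formulas $d_i = \lfloor k/3^{i-1}\rfloor - 3\lfloor k/3^i\rfloor$ are valid for every positive integer, so the substitution $k \mapsto 2k-1$ is unconditionally justified.
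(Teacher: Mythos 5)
Your proposal is correct and matches the paper's proof exactly: the paper likewise derives this corollary by substituting $2k-1$ into Corollary 1.1 via the identity $b_k=a_{2k-1}$ from Theorem 2. Your additional check that Corollary 1.1 holds for all positive integer arguments is a harmless elaboration of the same one-line argument.
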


\begin{proof}

This follows from Corollary 1.1, since $b_k=a_{2k-1}$.

\end{proof}

\pagebreak

\begin{cor}

Let\ $\nu_k$ be the largest integer $t$ for which $\frac{(k-1)(2k-1)}{3^t}$ is an integer. Then,
\[b_k-b_{k-1}=\frac{5^{\nu_k}+3}{2},\  and\    b_k=1+\sum_{i=2}^k\frac{5^{\nu_i}+3}{2}.\]
\vspace{3pt} 
If $m \ge 1$, $\nu_k=m$\  if and only if $k \in \left\{\left\lceil \frac{j3^m+1}{2}\right\rceil \right\},$\ where $j$ is a positive \vspace{3pt}\\integer and\ $j\ mod\ 3\ne 0$; if $m=0$,\ $\nu_k=m$\  if and only if $k \in \{3j\}$,\vspace{3pt}\\ where $j$ is a positive integer.  	

\end{cor}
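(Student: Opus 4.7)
The plan is to reduce everything to Corollary 1.2 via the identity $b_k = a_{2k-1}$ from Theorem 2. First I would split
\[ b_k - b_{k-1} = a_{2k-1} - a_{2k-3} = (a_{2k-1}-a_{2k-2}) + (a_{2k-2}-a_{2k-3}) \]
and apply Corollary 1.2 to each difference, obtaining $\frac{5^{\mu_{2k-1}}+1}{2} + \frac{5^{\mu_{2k-2}}+1}{2}$. Recalling that $\mu_n$ is just the $3$-adic valuation $v_3(n)$, and that $2k-2 = 2(k-1)$ gives $\mu_{2k-2} = v_3(k-1)$, I would reduce everything to the valuations of $k-1$ and $2k-1$.

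The crux is the observation that $2k-1$ and $k-1$ cannot both be divisible by $3$, since $2k-1 = 2(k-1)+1$. So at most one of $\mu_{2k-1}$ and $\mu_{2k-2}$ is positive, and whichever it is equals $v_3((k-1)(2k-1)) = \nu_k$. This collapses the sum to $\frac{5^{\nu_k}+1}{2} + 1 = \frac{5^{\nu_k}+3}{2}$. The closed form $b_k = 1 + \sum_{i=2}^k \frac{5^{\nu_i}+3}{2}$ then follows by telescoping, using $b_1 = a_1 = 1$.

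For the characterization of $\nu_k$ when $m \ge 1$, I would split into two sub-cases: either $v_3(k-1) = m$, which gives $k = 3^m j' + 1$ with $j'$ coprime to $3$, or $v_3(2k-1) = m$, which gives $k = (3^m j' + 1)/2$ with $j'$ odd and coprime to $3$. These merge into the single formula $k = \lceil (3^m j + 1)/2 \rceil$ by letting $j$ range over positive integers coprime to $3$, the even values of $j$ recovering the first sub-case (via $j=2j'$) and the odd values recovering the second (via $j=j'$). The case $m = 0$ is immediate: it requires both $k-1$ and $2k-1$ coprime to $3$, which forces $k \equiv 0 \pmod 3$.

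The main obstacle I anticipate is the parity bookkeeping in unifying the two sub-cases---nothing deep, but one needs to verify that the ceiling formula enumerates both families bijectively as $j$ ranges over positive integers not divisible by $3$, and that nothing else can arise.
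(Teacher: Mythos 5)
Your proposal is correct and follows essentially the same route as the paper: decomposing $b_k-b_{k-1}=(a_{2k-1}-a_{2k-2})+(a_{2k-2}-a_{2k-3})$, applying Corollary 1.2 twice, using that $k-1$ and $2k-1$ cannot both be divisible by $3$, and handling the $\nu_k=m$ characterization by the same parity split on $j$. No substantive differences or gaps.
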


\begin{proof}

\begin{gather*}
b_k=a_{2k-1},\\
b_k-b_{k-1}=(a_{2k-1}-a_{2k-2})+(a_{2k-2}-a_{2k-3}),\\
b_k-b_{k-1}=\frac{5^{\mu_{2k-1}}+1}{2}+\frac{5^{\mu_{2k-2}}+1}{2},
\end{gather*}
where $\mu_k$ is the largest integer $t$ such that $k/3^t$ is an integer.
\vspace{3pt} 

Note that $\nu_k$ is also the largest integer t for which\   $\frac{(2k-1)(2k-2)}{3^t}$\ is an integer.
Then we must have one of the following cases:
\begin{align*}
\mu_{2k-1}&=0\ \text{and}\ \mu_{2k-2}=0,\ \text{or}\\
\mu_{2k-1}&=\nu_k\ \text{and}\ \mu_{2k-2}=0,\ \text{or}\\
\mu_{2k-1}&=0\ \text{and}\ \mu_{2k-2}=\nu_k.
\end{align*}

In any of these cases,\ \[b_k-b_{k-1}=\frac{5^{\nu_k}+3}{2}.\]

If $m\ge 1$, at most one of\ $(k-1)$\ and\ $(2k-1)$ is divisible by $3^m$.\  $\nu_k=m$\ if and only if either $(k-1)$ or $(2k-1)$ is divisible by $3^m$ but not by $3^{m+1}$. Suppose\ $m\ge 1$\ and\ $j$\ mod\ $3\ne 0$\vspace{.1in}.

If \,  j \,is odd, $\left\lceil \frac{j3^m+1}{2}\right\rceil =\frac{j3^m+1}{2};$  
\vspace{.1in}\quad if $k=\frac{j3^m+1}{2},\ 2k-1=j3^m,\ \text{and}\ \nu_k=~m.$

\vspace{.1in}If\,  j\,  is even, $\left\lceil \frac{j3^m+1}{2}\right\rceil =\frac{j3^m+2}{2};$  
\vspace{.1in}\quad if $k=\frac{j3^m+2}{2},\ k-1=\frac{j3^m}{2},\ \text{and}\ \nu_k=~m.$

It is straightforward to show the converse, that if \mbox{$\nu_k=m \ge1,\ k\in~ \left\{\left\lceil \frac{j3^m+1}{2}\right\rceil \right\}$.}\vspace{.1in}

If\ $m=0$,\ $\nu_k=m$ if and only if neither\ $(k-1)$\ or\ $(2k-1)$\ is a multiple of 3. This occurs when\ $2k$\ (and therefore k) is a multiple of 3. 

\end{proof}

\begin{cor}

If $k\ge 1$,
\begin{alignat*}{2}
     b_{3k}\phantom{+}&=b_{3k-1}+2,& \qquad \\
   b_{3k+1}&=5b_{k+1}-4,&\qquad k\ mod\ 3&=0,\\
   	\phantom{b_{3k+1}}&=b_{3k}+4,&\qquad  k\ mod\ 3 &\ne 0,\\
   b_{3k+2}&=5b_{k+1}.& \qquad 
  \end{alignat*}

\end{cor}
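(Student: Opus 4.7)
The plan is to translate every equality in the corollary into a statement about the sequence $\{a_k\}$ via $b_k = a_{2k-1}$, and then grind it out using the recurrence in Corollary 1.3 together with (for one sub-case) the difference formula in Corollary 1.2. Concretely, $b_{3k} = a_{6k-1}$, $b_{3k+1}=a_{6k+1}$, $b_{3k+2}=a_{6k+3}$, $b_{3k-1}=a_{6k-3}$, and $b_{k+1}=a_{2k+1}$, so each claim becomes a short statement about $a$ at a specific index.

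For $b_{3k}=b_{3k-1}+2$: the target is $a_{6k-1}=a_{6k-3}+2$. Since $6k-1$ and $6k-2$ are both not divisible by $3$, two applications of the $k\bmod 3\ne 0$ branch of Corollary 1.3 yield $a_{6k-1}=a_{6k-2}+1=a_{6k-3}+2$. For $b_{3k+2}=5b_{k+1}$: the target is $a_{6k+3}=5a_{2k+1}$, which is immediate from the $k\bmod 3 = 0$ branch of Corollary 1.3 since $3\mid 6k+3$. For $b_{3k+1}$ we first note $a_{6k+1}=a_{6k}+1=5a_{2k}+1$. When $k\bmod 3 = 0$, $2k+1$ is not divisible by $3$, so $a_{2k+1}=a_{2k}+1$, i.e.\ $a_{2k}=b_{k+1}-1$, giving $a_{6k+1}=5b_{k+1}-4$. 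When $k\bmod 3\ne 0$, the $3$-adic valuation of $6k$ equals $1$, so by Corollary 1.2, $a_{6k}-a_{6k-1}=(5+1)/2=3$; combined with $a_{6k+1}=a_{6k}+1$, this gives $a_{6k+1}=a_{6k-1}+4=b_{3k}+4$.

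I expect no genuine obstacle; the proof is mechanical bookkeeping. The only spot that requires a deliberate pivot is the sub-case $k\equiv 0\pmod 3$ of $b_{3k+1}$, where the natural expansion yields $5a_{2k}+1$ rather than an expression in $a_{2k+1}=b_{k+1}$, and one needs to insert the one-line identity $a_{2k+1}=a_{2k}+1$ (valid precisely because $3\nmid 2k+1$) to finish. Everything else is a direct invocation of Corollary 1.3 at the right index.
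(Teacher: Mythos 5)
Your proof is correct and follows essentially the same route as the paper: rewrite each identity via $b_k=a_{2k-1}$ and apply the recurrence of Corollary 1.3 at the appropriate indices. The only divergence is the sub-case $k\bmod 3\ne 0$ of $b_{3k+1}$, where you invoke Corollary 1.2 with $\mu_{6k}=1$ to get $a_{6k}-a_{6k-1}=3$, whereas the paper chains $5a_{2k-1}=5b_k=b_{3k-1}=b_{3k}-2$; both are valid one-line finishes.
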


\begin{proof}
\begin{align*}
b_{3k}&=a_{6k-1}=a_{6k-2}+1=a_{6k-3}+2=b_{3k-1}+2,\\
b_{3k+2}&=a_{6k+3}=5a_{2k+1}=5b_{k+1},\\
b_{3k+1}&=a_{6k+1}=a_{6k}+1=5a_{2k}+1,\ \text{and}
\end{align*}

if k mod $3=0$,
\[b_{3k+1}=5(a_{2k+1}-1)+1=5b_{k+1}-4;\]

if k mod $3\ne 0$,
\[b_{3k+1}=5(a_{2k-1}+1)+1=5b_k+6=b_{3k-1}+6=(b_{3k}-2)+6=b_{3k}+4.\]
\end{proof}

\begin{theo}

\[f_n\ mod\ 10=\left\{\begin{array}{l}
     \left.\begin{array}{l}5 \end{array}\right.\ \ \ if\ n \not \in \{a_k\}\\
     \left.\begin{array}{l}
            1\\3\\7\\9 \end{array}\right\}\ if\ n  \in \{a_k\}\ \text{and}\ u_n\ mod\ 4=\left\{\begin{array}{l}
0\\1\\3\\2\end{array}\right.
\end{array}\right..\]

\end{theo}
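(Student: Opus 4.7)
The plan is to observe that since the $i=0$ term of the defining sum for $f_n$ is exactly $\binom{2n}{0}\binom{2n}{n}=\binom{2n}{n}$, we have $f_n = g_n + \binom{2n}{n}$. The theorem will then follow by combining the characterizations of $\binom{2n}{n}\bmod 10$ from Theorem 1 and of $g_n \bmod 10$ from Theorem 2, case by case. No new number-theoretic idea is required; the entire proof is organized bookkeeping.

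First I would dispose of the case $n\notin\{a_k\}$. Theorem 1 gives $\binom{2n}{n}\equiv 0\pmod 5$, and since $\binom{2n}{n}$ is even for $n\ge 1$, in fact $\binom{2n}{n}\equiv 0\pmod{10}$. Because $\{b_k\}=\{a_{2k-1}\}\subseteq\{a_k\}$, we also have $n\notin\{b_k\}$, so Theorem 2 gives $g_n\equiv 5\pmod{10}$. Adding, $f_n\equiv 5\pmod{10}$, as claimed.

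For $n\in\{a_k\}$, I would split on the parity of $n$, using the remark in Theorem 1 that $u_n$ and $n$ share parity in this case. A useful preliminary observation is that every $b_k$ is odd: $2k-1$ has last base-$3$ digit $1$ or $2$, so $a_{2k-1}$ has an odd last base-$5$ digit. Hence if $n$ is even then $n\notin\{b_k\}$ and $g_n\equiv 5\pmod{10}$, and I just add this to the value $\binom{2n}{n}\equiv 6$ or $4$ from Theorem 1 to obtain $f_n\equiv 1$ or $9$ according as $u_n\equiv 0$ or $2\pmod 4$. If $n$ is odd then $n\in\{b_k\}$, and I add $\binom{2n}{n}\equiv 2$ or $8$ from Theorem 1 to $g_n\equiv 1$ or $9$ from Theorem 2 to get $f_n\equiv 3$ or $7$ according as $u_n\equiv 1$ or $3\pmod 4$.

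These five trivial mod-$10$ additions exhaust the cases in the theorem's table. There is no real obstacle; the only point requiring a moment's care is the observation that $\{b_k\}$ consists entirely of odd integers, which cleanly separates the two subcases of $n\in\{a_k\}$ and explains why only the residues $\{1,3,5,7,9\}$ can occur.
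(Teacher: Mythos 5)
Your proof is correct and takes essentially the same route as the paper, whose entire argument is the identity $f_n=\binom{2n}{n}+g_n$ combined case by case with Theorems 1 and 2 -- exactly the bookkeeping you carry out. One small slip in your side remark: $2k-1$ need not end in base-$3$ digit $1$ or $2$ (e.g. $2k-1=3=(10)_3$, $b_2=5$), so the oddness of the $b_k$ (equivalently, that for $n\in\{a_k\}$ one has $n\in\{b_k\}$ iff $n$ is odd) should instead be taken from the characterization established in the proof of Theorem 2, or from the fact that $a_j\equiv j \pmod 2$ because powers of $3$ and of $5$ are all odd.
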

\vspace{.1in}\begin{proof}
Since $f_n=\binom{2n}{n}+g_n$, the corollary can be proved easily by combining the results of Theorem 1 and Theorem 2.
\end{proof}
\vspace{.6in}
\pagebreak
\begin{center}
\bf \Large References\normalfont\vspace{.2in}
\end{center}

\flushleft [1] I. Vardi, Computational Recreations in Mathematica, Addison-Welsey, California, 1991, p.70 (4.4).

\flushleft [2] H.S. Wilf, generatingfunctionology (1st ed.), Academic Press,\\ New York, 1990, p.50 (2.5.7, 2.5.11).

\vspace{2in}

\end{document}